\newtheorem{theorem}{Theorem}
\newtheorem{remark}{Remark}
\newenvironment{proof}[1][Proof]{\noindent\textbf{#1.} }{\ \rule{0.5em}{0.5em}}
\begin{document}

\title{An Elementary Approach to a Model Problem of Lagerstrom}
\author{S. P. Hastings and J. B. McLeod}
\maketitle

\begin{abstract}
The equation studied is $u^{\prime\prime}+\frac{n-1}{r}u^{\prime}+\varepsilon
u~u^{\prime}+ku^{\prime2}=0,$ \ with boundary conditions $u\left(  1\right)
=0,$ $u\left(  \infty\right)  =1$. \ \ This model equation has been studied by
many authors since it was introduced in the 1950s by P. A. Lagerstrom. We use
an elementary approach to show that there is an infinite series solution which
is uniformly convergent on $1\leq r<\infty.$ The first few terms are easily
derived, from which one quickly deduces the inner and outer asymptotic
expansions, with no matching procedure or a priori assumptions about the
nature of the expansion. \ We also give a short and elementary existence and
uniqueness proof which covers all $\varepsilon>0,$ $k\geq0,$ and $n\geq1$.

\end{abstract}

\section{Introduction}

The main problem is to investigate the asymptotics as $\varepsilon
\rightarrow0$ of the boundary value problem
\begin{equation}
u^{\prime\prime}+\frac{n-1}{r}u^{\prime}+\varepsilon uu^{\prime}+ku^{\prime
2}=0 \label{1}%
\end{equation}
with
\begin{equation}
u\left(  1\right)  =0,u\left(  \infty\right)  =1. \label{2}%
\end{equation}

We consider the cases $k=0$ and $k=1.$ Our interest in these problems,
originally due to Lagerstrom in the 1950s \cite{kla},\cite{la}, \ was
stimulated by two recent papers by Popovic and Szmolyan \cite{pop1}%
,\cite{pop2}, who adopt a geometric approach to the problem when $k=0$, and
there are many papers which use methods of matched asymptotics or multiple
scales, with varying degrees of rigor. \ We will review some of this work
below. \ The point of this paper is to give a completely rigorous and
relatively short answer to the problem without making any appeal either to
geometric methods or to matched asymptotics. \ We can express the solution as
an infinite series, uniformly convergent for all values of the independent
variable. \ From this series we obtain the inner and outer asymptotic
expansions with no a priori assumption about the nature of these expansions.
\ An important, and as far as we know, original, feature is that there is no
\textquotedblleft matching\textquotedblright.

\bigskip

Lagerstrom came up with these problems as models of viscous incompressible
($k=0$) and compressible ($k=1$) flow, so much of his work centered on $n=2$
or $3$, but he also discussed general $n\geq1$ \cite{lagerrein}. \ The
infinite series we develop can be obtained for any real number $n$. \ What $n$
controls is the rate of convergence of the series. \ 

\bigskip

For $\varepsilon=k=0,$ there is an obvious distinction between $n>2$ and
$n\leq2$. \ If $n>2$, \ then the problem $\left(  \ref{1}\right)  -\left(
\ref{2}\right)  $ has the unique solution
\begin{equation}
u=1-\frac{1}{r^{n-2}}, \label{1.1}%
\end{equation}
so that the solution with $\varepsilon$ \ small is presumably some sort of
perturbation of this. If $n\leq2$ \ then there is no such solution. \ \ A
consequence is that the convergence as $\varepsilon\rightarrow0$ \ is more
subtle when $n\leq2$ \ then when $n>2$. \ Our analysis will show that there is
little prospect of discussing the behavior for small $\varepsilon$ if $n<2,$
but fortunately we can handle all $n\geq2$. \ \ \ Although it has been thought
that finding the asymptotics when $k=1$ is considerably more difficult than
when $k=0,$ \cite{hi}, we will show that our technique covers each case with
comparable effort.

\bigskip

Our methods are not restricted to Lagerstrom's problems $\left(
\ref{1}\right)  -\left(  \ref{2}\right)  $. \ In subsequent work (in
preparation), we will show that there is a general method which can yield
similar results for a class of singularly perturbed boundary value problems.

\bigskip

We start in section \ref{sec2} by showing that each of these problems has one
and only one solution, for any $n\geq1$ and any $\varepsilon>0$. \ \ This is
based on a simple shooting argument plus a comparison principle. \ These
results have been obtained before, but our proof is quite short. \ \ In the
subsequent sections we develop the integral equation referred to above, and
show how it leads with relative ease to the inner and outer expansions.
\ These expansions go back to Lagerstrom and Kaplun, with rigorous
justification of some of the features to be found in \cite{cohen} or
\cite{pop2}, for example. \ We find the exposition in Hinch's book \cite{hi}
particularly clear (though nonrigorous), and make that our point of comparison
in checking that we get the same expansions as were found
previously.\footnote{We thank the referees for some very helpful comments.
\ In particular, they called our attention to earlier proofs of the existence
and uniqueness results, in some cases by techniques similar to ours.}

\bigskip

\section{Existence and uniqueness\label{sec2}}

\bigskip

As far as we know, the first existence proof was by Hsiao \cite{hsiao}, who
only considered $n=1$ and sufficiently small $\varepsilon>0.$ Subsequently Tam
gave what seems to be the first proof valid for all $\varepsilon>0$ and
$k\geq0$, \cite{tam}. \ Subsequent proofs by MacGillivray \cite{macgillivray},
Cohen, Lagerstrom and Fokas \cite{cohen}, Hunter, Tajdari, and Boyer
\cite{hunter}, each of which covers all $\varepsilon>0,$ and by several other
authors, e.g. \cite{rosenblat},\cite{pop1}, for restricted ranges of
$\varepsilon,$ add to the variety of techniques which have been shown to work.
\ Uniqueness is proved in \cite{hunter} (for $k=0$) by use of a contraction
mapping theorem, and in \cite{cohen} by essentially a comparison method. \ The
goal of \cite{pop1} is not to give a short proof, but to illustrate the
application of geometric perturbation theory to a much studied problem in
matched asymptotic expansions. \ The proofs we give of existence and
uniqueness are considerably shorter than the others we have seen.

\begin{theorem}
\label{th1}\ There exists a unique solution to the problem $\left(
\ref{1}\right)  -\left(  \ref{2}\right)  $ for any $k\geq0,$ $\varepsilon>0,$
and $n\geq1.$
\end{theorem}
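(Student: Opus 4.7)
The plan is to base the proof on a shooting argument in the slope $\alpha:=u'(1)>0$, supplemented by a comparison argument for uniqueness. The key preliminary, in both cases $k=0$ and $k=1$, is to integrate (1) once. For $k=0$, dividing by $u'$ gives $(\ln u')'=-(n-1)/r-\varepsilon u$, whence
\[
u'(r)=\alpha\, r^{-(n-1)}\exp\!\Bigl(-\varepsilon\!\int_1^r u(s)\,ds\Bigr). \qquad(*)
\]
For $k=1$, the substitution $w=e^u$ converts (1) to $w''+((n-1)/r+\varepsilon u)w'=0$, which integrates to an analogous first-order formula. In either case, $\alpha>0$ forces $u'>0$ throughout the interval of existence, so $u$ is strictly increasing there.

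The shooting step then proceeds as follows. For each $\alpha>0$, standard local existence theory produces a solution $u_\alpha$ of the IVP $u(1)=0,\ u'(1)=\alpha$. Using $(*)$, I would show that $u_\alpha$ extends globally to $[1,\infty)$ with a finite limit $L(\alpha):=\lim_{r\to\infty}u_\alpha(r)$: once $u_\alpha$ exceeds any positive level $M$ at some $R$, the inequality $\int_R^r u_\alpha\geq M(r-R)$ makes the right side of $(*)$ decay like $e^{-\varepsilon M r}$, so $u_\alpha$ is bounded. Continuous dependence on initial data then gives continuity of $L$. Next one verifies the two endpoint behaviors: $L(\alpha)\to 0$ as $\alpha\to 0^+$ (for $n>2$, from $u_\alpha'\leq\alpha r^{-(n-1)}$, which integrates to $L(\alpha)\leq\alpha/(n-2)$; for $n=2$ by a contradiction argument exploiting that $u_\alpha\to 0$ uniformly on compact sets combined with the exponential damping in $(*)$ once $u_\alpha$ reaches any positive threshold), and $L(\alpha)>1$ for large $\alpha$ (from the lower bound $u_\alpha'(r)\geq\alpha r^{-(n-1)} e^{-\varepsilon M(r-1)}$ valid while $u_\alpha\leq M$ on $[1,2]$, which makes $u_\alpha(2)$ grow linearly in $\alpha$). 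The intermediate value theorem then yields an $\alpha^*>0$ with $L(\alpha^*)=1$.

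For uniqueness, I would assume for contradiction that two BVP solutions $u_1,u_2$ exist with $\alpha_1<\alpha_2$, and study the difference $\phi=u_2-u_1$, which satisfies the linear equation $\phi''+((n-1)/r+\varepsilon u_1)\phi'+\varepsilon u_2'\phi=0$ with $\phi(1)=\phi(\infty)=0$. Combining $(*)$ evaluated at the first crossing point of $u_1$ and $u_2$ with the global identity $\alpha_i\int_1^\infty r^{-(n-1)}e^{-\varepsilon U_i(r)}\,dr=1$ (equivalent to $u_i(\infty)=1$) would force $\alpha_1=\alpha_2$. The main obstacle will be precisely this step: the shooting part is essentially routine once $(*)$ is in hand, but uniqueness is subtle because two solutions can in principle cross more than once and strict monotonicity of $\alpha\mapsto L(\alpha)$ does not follow directly from $(*)$. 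The case $n=2$ is the most delicate throughout, since both the a priori bound $L(\alpha)\to 0$ and the decay at infinity require careful use of the exponential factor in $(*)$ rather than polynomial estimates alone.
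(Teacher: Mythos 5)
Your existence argument follows essentially the same route as the paper: shoot in $\alpha=u'(1)$, use the first integral $r^{n-1}e^{ku}u'=\alpha e^{-\varepsilon\int_1^r u\,ds}$ to get positivity, monotonicity, global existence and boundedness, and then apply the intermediate value theorem to $L(\alpha)$. One detail you should not wave away is the continuity of $L(\alpha)=\lim_{r\to\infty}u_\alpha(r)$: continuous dependence on initial data gives continuity of $u_\alpha(R)$ for each \emph{fixed} $R$ only, and to pass to the limit you need the tail $\int_R^\infty u_\alpha'$ to be small uniformly in $\alpha$ over the relevant compact interval. Your own estimate supplies this (once $u_\alpha\geq M>0$ at some $R$, with $M$ and $R$ uniform for $\alpha$ bounded away from $0$, the tail is exponentially small in $R$), but it has to be invoked; this is exactly the step the paper spends several lines on.

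The genuine gap is in uniqueness, and you have flagged it yourself. The route you propose --- combining $(*)$ at a first crossing point with the identity $\alpha_i\int_1^\infty r^{-(n-1)}e^{-\varepsilon\int_1^r u_i\,ds}\,dr=1$ --- does not close: if $\alpha_2>\alpha_1$ and $u_2\geq u_1$, then in that identity the prefactor $\alpha_2$ is larger while the exponential factor is smaller, so the two effects compete and no contradiction follows; and without an ordering result you cannot even assume $u_2\geq u_1$ globally. The paper needs two further ideas here. First, a continuation argument in the parameters $(\varepsilon,k)$ shows that two solutions of the initial value problem with distinct initial slopes remain strictly ordered on all of $(1,\infty)$: at a first parameter value for which they touch at some finite $X$ they would have to be tangent there, contradicting uniqueness for the initial value problem. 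Second, granted $u_1>u_2$ on $(1,\infty)$ with $u_1(\infty)=u_2(\infty)$, one observes from $\left(r^{n-1}u'\right)'+\left(\varepsilon u+ku'\right)\left(r^{n-1}u'\right)=0$ that $u_1'-u_2'$ cannot oscillate, so there is a last $R$ with $u_1'(R)=u_2'(R)$ and $u_1'<u_2'$ beyond it; integrating the difference of the equations over $(R,\infty)$ and integrating the $\varepsilon uu'$ term by parts yields
\[
0=\tfrac{1}{2}\varepsilon R^{n-1}\left(u_1^2-u_2^2\right)(R)+\tfrac{1}{2}\varepsilon\left(n-1\right)\int_R^\infty s^{n-2}\left(u_1^2-u_2^2\right)ds-k\int_R^\infty s^{n-1}\left(u_1'^2-u_2'^2\right)ds,
\]
in which every term on the right is positive (this is where $n\geq1$ enters essentially). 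Without some substitute for these two steps your uniqueness argument is not a proof.
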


\begin{proof}
Like some others, starting with \cite{tam}, we prove existence using a
shooting method, by considering the initial value problem
\begin{align}
u^{\prime\prime}+\frac{n-1}{r}u^{\prime}+\varepsilon uu^{\prime}+ku^{\prime2}
&  =0\label{e1}\\
u\left(  1\right)  =0,~u^{\prime}\left(  1\right)  =  &  c, \label{e1a}%
\end{align}
for each $c>0.$ Since $u^{\prime}=0$ implies that $u^{\prime\prime}=0$ and $u$
is constant, any solution to this problem is positive and increasing. \ As was
observed in \cite{tam},
\[
u^{\prime\prime}+\varepsilon uu^{\prime}\leq0,
\]
and so from $\left(  \ref{e1a}\right)  ,$%
\[
u^{\prime}+\frac{1}{2}\varepsilon u^{2}\leq c.
\]
In particular, since $u^{\prime}\geq0,$
\begin{equation}
u\leq\sqrt{\frac{2c}{\varepsilon}}, \label{e1b}%
\end{equation}
so the solution exists, and satisfies this bound, on $[1,\infty)$.
\ Therefore, $\lim_{r\rightarrow\infty}u\left(  r\right)  $ exists. \ Writing
the equation in the form
\begin{equation}
\left(  r^{n-1}u^{\prime}\right)  ^{\prime}+\left(  \varepsilon u+ku^{\prime
}\right)  \left(  r^{n-1}u^{\prime}\right)  =0, \label{e2}%
\end{equation}
and integrating twice, gives
\begin{align}
r^{n-1}u^{\prime}\left(  r\right)   &  =ce^{-ku\left(  r\right)
-\varepsilon\int_{1}^{r}u\left(  s\right)  ds},\label{e4}\\
u\left(  r\right)   &  =\int_{1}^{r}\frac{c}{s^{n-1}}e^{-ku-\int_{1}%
^{s}\varepsilon udt}ds. \label{e4a}%
\end{align}
If $u\left(  2\right)  <1,$ then since $u$ is increasing, $\left(
\mathbf{\ref{e4a}}\right)  $ implies that
\[
u\left(  2\right)  >p\left(  c\right)  =\int_{1}^{2}\frac{c}{s^{n-1}%
}e^{-\varepsilon-k}ds.
\]
From this, and $\left(  \ref{e1b}\right)  ,$ we see that there are $c_{1}$ and
$c_{2},$ with $0<c_{1}<c_{2}$, such that if $c=c_{1}$ then $u\left(
\infty\right)  <1,$ while if $c=c_{2},$ then $u\left(  \infty\right)
>u\left(  2\right)  \geq1$. \ Further, from $\left(  \ref{e4}\right)  $ for
any $r>R>2,$ \
\[
u\left(  r\right)  =u\left(  R\right)  +c\int_{R}^{r}\frac{1}{s^{n-1}%
}e^{-ku-\int_{1}^{s}\varepsilon udt}ds.
\]
If $n>2$ \ the second term is bounded above by $\frac{c}{\left(  n-2\right)
R^{n-2}},$ while if $1\leq n\leq2$ and $R\geq2,$ it is bounded by $c\int
_{R}^{\infty}e^{-\varepsilon\left(  s-2\right)  p\left(  c\right)  }ds.$
Hence, this term tends to zero as $R\rightarrow\infty,$ uniformly for $r\geq
R,$ $c_{1}\leq c\leq c_{2}$. \ Since $u\left(  R\right)  $ is a continuous
function of $c,$ for any $R,$ it follows that $u\left(  \infty\right)  $ is
also continuous in $c$, and so there is a $c$ with $u\left(  \infty\right)
=1,$ giving a solution to $\left(  \ref{1}\right)  -\left(  \ref{2}\right)  .$

For uniqueness, suppose that there are two solutions of $\left(
\ref{1}\right)  -\left(  \ref{2}\right)  ,$ say $u_{1}$ and $u_{2},$\ with
$u_{1}^{\prime}\left(  1\right)  >u_{2}^{\prime}\left(  1\right)  >0.$ Then
$u_{1}>u_{2}$ on some maximal interval, say $\left(  1,X\right)  $ where
$X\leq\infty.$ \ For the same initial conditions, if $\varepsilon=k=0$ then
direct integration shows that $u_{1}>u_{2}$ on $\left(  1,\infty\right)  $,
and moreover, $u_{1}\left(  \infty\right)  >u_{2}\left(  \infty\right)  $.
\ We then raise $\varepsilon$ and $k,$ looking for a pair $\left(
\varepsilon_{1},k_{1}\right)  $ such that $u_{1}\left(  X\right)
=u_{2}\left(  X\right)  $ for some $X\leq\infty,$ and if $0\leq\varepsilon
<\varepsilon_{1}$ or $0\leq k<k_{1},$ no such $X$ exists. \ Hence, at $\left(
\varepsilon_{1},k_{1}\right)  ,$ $u_{1}\geq u_{2}$ on $[0,\infty)$. \ If, at
$\left(  \varepsilon_{1},k_{1}\right)  ,$ $X<\infty$, \ then $u_{1}$ and
$u_{2}$ must be tangent at $X,$ since $u_{1}-u_{2}$ has a minimum there,
\ contradicting the uniqueness of initial value problems for $\left(
\ref{1}\right)  $. \ Hence, $X=\infty$, and $u_{1}>u_{2}$ on $\left(
1,\infty\right)  $.

Observe from $\left(  \ref{e2}\right)  $ that if $u_{1}^{\prime}\left(
r\right)  =u_{2}^{\prime}\left(  r\right)  $ for some $r,$ then $\left(
r^{n-1}\left(  u_{1}^{\prime}-u_{2}^{\prime}\right)  \right)  ^{\prime}<0,$
since $u_{1}>u_{2},$ so that there cannot be oscillations in $u_{1}^{\prime
}-u_{2}^{\prime}$. Hence, $u_{1}\left(  \infty\right)  =u_{2}\left(
\infty\right)  $ implies that there is an $R$ with $u_{1}^{\prime}\left(
R\right)  =u_{2}^{\prime}\left(  R\right)  $ and $u_{1}^{\prime}<u_{2}%
^{\prime}$ on $\left(  R,\infty\right)  $. \ Integrating $\left(
\ref{e2}\right)  ,$ and recalling that $u_{1}\left(  \infty\right)
=u_{2}\left(  \infty\right)  ,$ gives
\begin{align*}
r^{n-1}\left(  u_{1}^{\prime}-u_{2}^{\prime}\right)  |_{R}^{\infty}  &
=\frac{1}{2}\varepsilon R^{n-1}\left(  u_{1}^{2}-u_{2}^{2}\right)  |_{R}\\
&  +\frac{1}{2}\varepsilon\left(  n-1\right)  \int_{R}^{\infty}s^{n-2}\left(
u_{1}^{2}-u_{2}^{2}\right)  ds-k\int_{R}^{\infty}s^{n-1}\left(  u_{1}%
^{\prime2}-u_{2}^{\prime2}\right)  ds.
\end{align*}
The left hand side is zero, and all the terms on the right are positive,
giving the necessary contradiction.
\end{proof}

\begin{remark}
The existence theorem in \cite{pop1} has one added part. \ It is shown there
that as $\varepsilon\rightarrow0,$\ \ the solution tends to a so-called
\textquotedblleft singular\textquotedblright\ solution obtained by taking a
formal limit as $\varepsilon\rightarrow0.$\ See \cite{pop1} \ for details.
\ This limit result follows from our rigorous asymptotic expansions given below.
\end{remark}

\begin{remark}
There would seem to be no difficulty in extending the existence proof even to
$n<1,$\ but the uniqueness proof does use essentially the fact that $n\geq1.$\ 
\end{remark}

\section{The infinite series (with $k=0,$ $n\geq2$)\label{infser}}

\bigskip

Starting again with $\left(  \ref{1}\right)  $, \ and $u\left(  1\right)  =0,$
\ we first consider the case $k=0,$ and obtain
\begin{equation}
r^{n-1}u^{\prime}=Be^{-\varepsilon\int_{1}^{r}u\left(  t\right)  dt}
\label{3.1}%
\end{equation}
for some constant $B$. \ Since $u\left(  \infty\right)  =1$, \ $\left(
\ref{3.1}\right)  $ implies that $u^{\prime}\left(  r\right)  $ is
exponentially small as $r\rightarrow\infty$. \ \ Hence we can rewrite $\left(
\ref{3.1}\right)  $ as
\[
r^{n-1}u^{\prime}=Ce^{-\varepsilon r-\varepsilon\int_{\infty}^{r}\left(
u-1\right)  dt},
\]
so that
\[
u-1=C\int_{\infty}^{r}\frac{1}{t^{n-1}}e^{-\varepsilon t-\varepsilon
\int_{\infty}^{t}\left(  u-1\right)  ds}dt.
\]
Setting $\varepsilon r=\rho$, $\varepsilon t=\tau$, $\ $and $\varepsilon
s=\sigma,$ \ we obtain
\begin{equation}
u\left(  \rho\right)  -1=C\varepsilon^{n-2}\int_{\infty}^{\rho}\frac{1}%
{\tau^{n-1}}e^{-\tau}e^{-\int_{\infty}^{\tau}\left(  u\left(  \sigma\right)
-1\right)  d\sigma}d\tau, \label{3.2}%
\end{equation}
where we use the arguments $\rho$ and $\sigma$ to indicate that we mean the
rescaled version of $u$. \ \ Here $C$ is a constant satisfying%
\begin{equation}
-1=C\varepsilon^{n-2}\int_{\infty}^{\varepsilon}\frac{1}{\tau^{n-1}}e^{-\tau
}e^{-\int_{\infty}^{\tau}\left(  u-1\right)  d\sigma}d\tau. \label{3.2a}%
\end{equation}
Since for each $\varepsilon$ there is a unique solution, this determines a
unique $C,$ dependent on $\varepsilon$. \ 

\bigskip

We now consider the integral
\begin{equation}
\int_{\tau}^{\infty}\left(  1-u\left(  \sigma\right)  \right)  d\sigma,
\label{3.2b}%
\end{equation}
which appears in the exponent in $\left(  \ref{3.2a}\right)  $. \ This
integral has been seen to converge for each $\varepsilon$, but we need a bit
more, namely, that it is bounded uniformly in $\varepsilon\leq\tau<\infty$
\ as $\varepsilon\rightarrow0.$ To see this, we note that as a function of
$\sigma,$ $u$ satisfies
\begin{align*}
\frac{d^{2}u}{d\sigma^{2}}+\frac{n-1}{\sigma}\frac{du}{d\sigma}+u\frac
{du}{d\sigma}  &  =0\\
u=0\text{ when }\sigma &  =\varepsilon\text{, }u\left(  \infty\right)  =1.
\end{align*}
Denoting the unique solution by $u_{\varepsilon}\left(  \sigma\right)  ,$ we
claim that if $0<\varepsilon_{1}<\varepsilon_{2},$ then $u_{\varepsilon_{1}%
}>u_{\varepsilon_{2}}$ for $\varepsilon_{2}\leq\sigma<\infty$. \ \ If this is
false, then $\varepsilon_{1}$ and $\varepsilon_{2}$ \ can be chosen so that
$u_{\varepsilon_{1}}\left(  \sigma_{0}\right)  =u_{\varepsilon_{2}}\left(
\sigma_{0}\right)  $ for some $\sigma_{0}\geq\varepsilon_{2}$. \ But then, the
problem
\begin{align*}
\frac{d^{2}u}{d\sigma^{2}}+\frac{n-1}{\sigma}\frac{du}{d\sigma}+u\frac
{du}{d\sigma}  &  =0\\
u\left(  \sigma_{0}\right)  =u_{\varepsilon_{1}}\left(  \sigma_{0}\right)
,~u\left(  \infty\right)   &  =1
\end{align*}
has two solutions, contradicting our earlier uniqueness proof. \ 

A consequence of this is that $\int_{\varepsilon_{2}}^{\infty}\left(
1-u_{\varepsilon_{1}}\left(  \sigma\right)  \right)  d\sigma<\int
_{\varepsilon_{2}}^{\infty}\left(  1-u_{\varepsilon_{2}}\left(  \sigma\right)
\right)  d\sigma$, \ which implies that the integral in the exponent in
$\left(  \ref{3.2a}\right)  ,$ including the minus sign in front, is bounded
below independently of $\tau\geq\varepsilon,$ \ and of $\varepsilon$. \ We
then see that the $\tau$-integral in $\left(  \ref{3.2a}\right)  $ approaches
$-\infty$ as $\varepsilon\rightarrow0,$ and hence, that%

\[
\lim_{\varepsilon\rightarrow0^{+}}C\varepsilon^{n-2}=0.
\]

Since $\ \int_{\infty}^{\tau}\left(  u-1\right)  d\sigma>0,$ it follows from
$\left(  \ref{3.2}\right)  $ that if
\[
E_{n-1}\left(  \rho\right)  =\int_{\rho}^{\infty}\frac{1}{\tau^{n-1}}e^{-\tau
}d\tau,
\]
then
\begin{equation}
\left\vert u\left(  \rho\right)  -1\right\vert <C\varepsilon^{n-2}%
E_{n-1}\left(  \rho\right)  . \label{3.3}%
\end{equation}
\bigskip\ \ \ 

For purposes of future estimates, we make the obvious remark that
\begin{equation}
E_{n-1}\left(  \rho\right)  =\left\{
\begin{array}
[c]{c}%
O\left(  \rho^{2-n}\right)  \text{ as }\rho\rightarrow0\text{ if }n>2\\
O\left(  \log\rho\right)  \text{ as }\rho\rightarrow0\text{ if }n=2\\
O\left(  \rho^{1-n}e^{-\rho}\right)  \text{ as }\rho\rightarrow\infty.
\end{array}
\right.  \label{3.4}%
\end{equation}
Hence if $n>2$ \ there is a constant $K$ such that
\begin{equation}
E_{n-1}\left(  \rho\right)  \leq K\min\left(  \rho^{2-n},\rho^{1-n}e^{-\rho
}\right)  . \label{3.3b}%
\end{equation}

The method now is to work from $\left(  \ref{3.2}\right)  $. \ As observed
before, since $u^{\prime}\left(  r\right)  $ is exponentially small as
$r\rightarrow\infty$, \ \ the integral term $\int_{\rho}^{\infty}\left(
u-1\right)  d\sigma$ converges.\ \ \ Hence, for given $\varepsilon>0$ and
$\rho_{0}>0,$ and any $\rho\geq\rho_{0}$,%
\begin{equation}
u\left(  \rho\right)  -1=C\varepsilon^{n-2}\int_{\infty}^{\rho}\frac{1}%
{\tau^{n-1}}e^{-\tau}\left\{  1-\int_{\infty}^{\tau}\left(  u-1\right)
d\sigma+\frac{1}{2}\left(  \int_{\infty}^{\tau}\left(  u-1\right)
d\sigma\right)  ^{2}-\cdot\cdot\cdot\right\}  d\tau, \label{3.3a}%
\end{equation}
where the series in the integrand converges uniformly for $\rho_{0}\leq
\tau<\infty$. \ \ 

\bigskip

In fact, we will need to use this series for all $\rho\geq\varepsilon$. \ Thus
we need to check its convergence in this interval. This follows from $\left(
\ref{3.3}\right)  $ and $\left(  \ref{3.4}\right)  ,$ which imply that for any
$\rho\geq\varepsilon,$ if $n\geq2,$ \ then
\begin{equation}
\left\vert \int_{\rho}^{\infty}\left(  u\left(  s\right)  -1\right)
ds\right\vert <C\varepsilon^{n-2}\int_{\varepsilon}^{\infty}E_{n-1}\left(
s\right)  ds \label{3.3c}%
\end{equation}
and
\[
\varepsilon^{n-2}\int_{\varepsilon}^{\infty}E_{n-1}\left(  s\right)
ds=\left\{
\begin{array}
[c]{c}%
o\left(  1\right)  \text{ as }\varepsilon\rightarrow0\text{ if }n>2\\
O\left(  1\right)  \text{ as }\varepsilon\rightarrow0\text{ if }n=2
\end{array}
.\right.
\]
$\newline$Hence for $n>2$ and any $C,$ the series in the integrand of $\left(
\ref{3.3a}\right)  $ converges uniformly on $[\varepsilon,\infty).$

Now set
\[
\Phi=C\varepsilon^{n-2}\int_{\varepsilon}^{\infty}E_{n-1}\left(  s\right)
ds.
\]
We note that, if $n>2,$ then $\Phi\rightarrow0$ as $\varepsilon\rightarrow0,$
while if $n=2,$ then $\Phi\rightarrow0$ as $C\rightarrow0$.

We proceed to solve $\left(  \ref{3.3a}\right)  $ by iteration. \ Thus, the
first approximation is, from $\left(  \ref{3.3b}\right)  ,$%
\[
u\left(  \rho\right)  -1=C\varepsilon^{n-2}\int_{\infty}^{\rho}\frac{1}%
{\tau^{n-1}}e^{-\tau}d\tau+O\left(  \Phi^{2}\right)  ,
\]
and we obtain the second approximation by substituting this back in $\left(
\ref{3.3a}\right)  .$ \ Repeating this, we reach%

\begin{equation}%
\begin{array}
[c]{c}%
u-1=-C\varepsilon^{n-2}E_{n-1}+\left(  C\varepsilon^{n-2}\right)  ^{2}%
\int_{\infty}^{\rho}\frac{1}{\tau^{n-1}}e^{-\tau}\left(  \int_{\infty}^{\tau
}E_{n-1}d\sigma\right)  d\tau\\
+\frac{1}{2}\left(  C\varepsilon^{n-2}\right)  ^{3}\int_{\infty}^{\rho}%
\frac{1}{\tau^{n-1}}e^{-\tau}\left(  \int_{\infty}^{\tau}E_{n-1}%
d\sigma\right)  ^{2}d\tau\\
-\left(  C\varepsilon^{n-2}\right)  ^{3}\int_{\infty}^{\rho}\frac{1}%
{\tau^{n-1}}e^{-\tau}\int_{\infty}^{\tau}\left\{  \int_{\infty}^{\sigma}%
\frac{1}{s^{n-1}}e^{-s}\left(  \int_{\infty}^{s}E_{n-1}dt\right)  ds\right\}
d\sigma d\tau+O\left(  \Phi^{4}\right)  ,
\end{array}
\label{3.9}%
\end{equation}
as $\Phi\rightarrow0$. \ \ 

To obtain $C,$ we need to be able to evaluate each of these terms for small
$\rho$ (in particular, for $\rho=\varepsilon$), \ and this is a matter of
integration by parts. \ Thus, for non-integral $n$,
\begin{align}
E_{n-1}\left(  \rho\right)   &  =\int_{\rho}^{\infty}\frac{e^{-\tau}}%
{\tau^{n-1}}d\tau=-\frac{\rho^{2-n}}{2-n}e^{-\rho}+\frac{1}{2-n}\int_{\rho
}^{\infty}\frac{e^{-\tau}}{\tau^{n-2}}d\tau\nonumber\\
&  =-\frac{\rho^{2-n}}{2-n}e^{-\rho}+\frac{1}{2-n}E_{n-2}, \label{3.9a}%
\end{align}
and this can be repeated to give $E_{n-1}$ \ as a sum of terms of the form
$c_{k}\rho^{k}e^{-\rho}$ and $E_{n-p},$ until $0<n-p<1$.\ \ Then%
\begin{align*}
E_{n-p}  &  =\int_{0}^{\infty}\frac{e^{-\tau}}{\tau^{n-p}}d\tau-\int_{0}%
^{\rho}\frac{e^{-\tau}}{\tau^{n-p}}d\tau\\
&  =\Gamma\left(  p+1-n\right)  -\int_{0}^{\rho}\frac{e^{-\tau}}{\tau^{n-p}%
}d\tau
\end{align*}
and we can then continue to integrate by parts as far as we like. \ (If $n$ is
an integer, we will reach $\int_{\rho}^{\infty}\frac{e^{-\tau}}{\tau}d\tau$,
\ which introduces a logarithm.)

\bigskip

Thus $E_{n-1}\left(  \rho\right)  $ can be expressed as a sum of terms of the
form $c_{k}\rho^{k}e^{-\rho}$, \ and so obviously the same is true of
$E_{n-1}^{2}$ , with $e^{-2\rho}$ in place of $e^{-\rho}$. \ Also,
\begin{align}
\int_{\infty}^{\rho}E_{n-1}\left(  \tau\right)  d\tau &  =\int_{\infty}^{\rho
}\left(  \int_{\tau}^{\infty}\frac{e^{-\sigma}}{\sigma^{n-1}}d\sigma\right)
d\tau\nonumber\\
&  =\left[  \tau\left(  \int_{\tau}^{\infty}\frac{e^{-\sigma}}{\sigma^{n-1}%
}d\sigma\right)  \right]  |_{\infty}^{\rho}+\int_{\infty}^{\rho}\frac
{e^{-\tau}}{\tau^{n-2}}d\tau\nonumber\\
&  =\rho E_{n-1}-E_{n-2}, \label{3.10}%
\end{align}
so that $\int_{\infty}^{\rho}E_{n-1}d\tau$ \ can be expressed as the same type
of sum. \ Hence the second term in $\left(  \ref{3.9}\right)  $ gives a sum of
terms of the form $E_{k}\left(  2\rho\right)  $ and the third and fourth terms
a sum involving $E_{k}\left(  3\rho\right)  $. \ 

\bigskip

We now carry the process through in the most interesting cases, $n=2,3$. \ 

\bigskip\ 

\section{The case $k=0,n=2\label{n=2}$}

When $n=2$ we are interested in
\begin{align}
E_{1}\left(  \rho\right)   &  =\int_{\rho}^{\infty}\frac{1}{\tau}e^{-\tau
}d\tau\nonumber\\
&  =-e^{-\rho}\log\rho+\int_{\rho}^{\infty}e^{-\tau}\log\tau d\tau\nonumber\\
&  =-e^{-\rho}\log\rho+\int_{0}^{\infty}e^{-\tau}\log\tau d\tau-\int_{0}%
^{\rho}e^{-\tau}\log\tau d\tau\nonumber\\
&  =-e^{-\rho}\log\rho-\gamma-\rho\left(  \log\rho-1\right)  e^{-\rho
}+O\left(  \rho^{2}\log\rho\right)  ,\text{ for small }\rho,\nonumber\\
&  =-\log\rho-\gamma+\rho+O\left(  \rho^{2}\log\rho\right)  . \label{4.0}%
\end{align}

(See, for example, \cite{erd}, Chapter 1.) \ Also, for future purposes, using
$\left(  \ref{3.9a}\right)  $ we obtain%
\begin{align}
E_{2}\left(  \rho\right)   &  =\frac{e^{-\rho}}{\rho}-E_{1}\left(  \rho\right)
\label{4.01}\\
&  =\frac{1}{\rho}+\log\rho+\left(  \gamma-1\right)  -\frac{1}{2}\rho+O\left(
\rho^{2}\log\rho\right)  \text{ as }\rho\rightarrow0. \label{4.1}%
\end{align}

\bigskip

Looking now at $\left(  \ref{3.9}\right)  ,$ with $\rho=\varepsilon,$ we see
that as $\varepsilon\rightarrow0,$%
\[
C\log\varepsilon\rightarrow-1
\]
and%
\[
C=\frac{1}{\log\frac{1}{\varepsilon}}+O\left(  \frac{1}{\left(  \log\frac
{1}{\varepsilon}\right)  ^{2}}\right)  .
\]
Hence the series in $\left(  \ref{3.9}\right)  $ is in powers of $\frac
{1}{\log\frac{1}{\varepsilon}}.$ \ 

\bigskip

Also, we will work our approximations (in order to compare the results with
those of Hinch in \cite{hi}) to order $\frac{1}{\log^{2}\left(  \frac
{1}{\varepsilon}\right)  }$, \ so that (for example)
\[
u=\frac{a\left(  r\right)  }{\log\left(  \frac{1}{\varepsilon}\right)  }%
+\frac{b\left(  r\right)  }{\log^{2}\left(  \frac{1}{\varepsilon}\right)
}+O\left(  \log^{-3}\frac{1}{\varepsilon}\right)
\]
for any fixed value of $r$ \ ($\rho$ of order $\varepsilon$). \ This, as we
shall see, necessitates finding
\[
C=\frac{1}{\log\left(  \frac{1}{\varepsilon}\right)  }\left\{  1+\frac{A}%
{\log\left(  \frac{1}{\varepsilon}\right)  }+\frac{B}{\log^{2}\left(  \frac
{1}{\varepsilon}\right)  }+O\left(  \log^{-3}\left(  \frac{1}{\varepsilon
}\right)  \right)  \right\}  ,
\]
and requires use of all the terms in $\left(  \ref{3.9}\right)  .$ \ 

\bigskip

With this in mind, we look at the second term of $\left(  \ref{3.9}\right)  $.
\ Thus from $\left(  \ref{3.10}\right)  ,$%
\begin{equation}
\int_{\rho}^{\infty}E_{1}d\tau=-\rho E_{1}+e^{-\rho}, \label{4.2}%
\end{equation}
so that the second term is
\begin{align}
C^{2}\int_{\infty}^{\rho}\frac{1}{\tau}e^{-\tau}\left(  \tau E_{1}-e^{-\tau
}\right)  d\tau &  =C^{2}\left\{  \int_{\infty}^{\rho}e^{-\tau}E_{1}d\tau
-\int_{\infty}^{\rho}\frac{e^{-2\tau}}{\tau}d\tau\right\} \nonumber\\
&  =C^{2}\left\{  \left[  -e^{-\tau}E_{1}\right]  |_{\infty}^{\rho}%
-2\int_{\infty}^{\rho}\frac{e^{-2\tau}}{\tau}d\tau\right\} \nonumber\\
&  =C^{2}\left(  -e^{-\rho}E_{1}\left(  \rho\right)  +2E_{1}\left(
2\rho\right)  \right)  . \label{4.2a}%
\end{align}
From $\left(  \ref{4.0}\right)  $, the second term is therefore
\begin{align}
&  C^{2}\left(  \log\rho+\gamma-2\log2\rho-2\gamma+O\left(  \rho\right)
\right) \nonumber\\
&  =C^{2}\left(  -\log\rho-\gamma-2\log2+O\left(  \rho\right)  \right)
\label{4.3}%
\end{align}
as $\rho\rightarrow0$. \ \ 

\bigskip

In the third and fourth terms of $\left(  \ref{3.9}\right)  $ we need only the
leading terms, i.e. we can ignore the equivalent of $\gamma+2\log2$ \ in
$\left(  \ref{4.3}\right)  $. \ Using $\left(  \ref{4.2}\right)  $ the third
term becomes
\begin{equation}
\frac{1}{2}C^{3}\int_{\infty}^{\rho}\frac{e^{-\tau}}{\tau}\left(  e^{-\tau
}-\tau E_{1}\right)  ^{2}d\tau=\frac{1}{2}C^{3}\left(  \log\rho+O\left(
1\right)  \right)  \text{ as }\rho\rightarrow0. \label{4.4}%
\end{equation}

\bigskip

Finally, in the fourth term, the integrand in the $\tau$-integral is just the
second term, (as a function of $\sigma$), \ so that from $\left(
\ref{4.2}\right)  ,$ the fourth term is
\begin{equation}
M=-C^{3}\int_{\infty}^{\rho}\frac{e^{-\tau}}{\tau}\left[  \int_{\infty}^{\tau
}\left\{  -e^{-\sigma}E_{1}\left(  \sigma\right)  +2E_{1}\left(
2\sigma\right)  \right\}  d\sigma\right]  d\tau. \label{4.4a}%
\end{equation}

It is seen from $\left(  \ref{4.2}\right)  $ that for any $\tau\leq\infty,$
$\int_{0}^{\tau}E_{1}\left(  \sigma\right)  d\sigma$ \ converges. \ Hence we
can write the inner integral above in the form $\int_{\infty}^{0}+\int
_{0}^{\tau},$ \ and it follows that
\[
M=-C^{3}\int_{\infty}^{\rho}\frac{e^{-\tau}}{\tau}\left\{  K+r\left(
\tau\right)  \right\}  d\tau
\]
where $K$ is a constant, $r$ is bounded and $r\left(  \tau\right)  =O\left(
\tau\log\tau\right)  $ as $\tau\rightarrow0$. \ \ \ It further follows that
\[
M=C^{3}\left(  KE_{1}\left(  \rho\right)  +O\left(  1\right)  \right)  \text{
as }\rho\rightarrow0.
\]
We can evaluate $K$ using $\left(  \ref{4.2}\right)  $ and $\left(
\ref{4.0}\right)  $:%
\begin{align}
\int_{0}^{\infty}E_{1}\left(  2\sigma\right)  d\sigma &  =\frac{1}{2}\int
_{0}^{\infty}E_{1}\left(  u\right)  du=\frac{1}{2},\nonumber\\
\int_{0}^{\infty}e^{-\sigma}E_{1}\left(  \sigma\right)  d\sigma &  =\left[
-\left(  e^{-\sigma}-1\right)  E_{1}\right]  _{0}^{\infty}-\int_{0}^{\infty
}\left(  e^{-\sigma}-1\right)  \frac{e^{-\sigma}}{\sigma}d\sigma\nonumber\\
&  =\lim_{\sigma\rightarrow0}\left\{  -E_{1}\left(  2\sigma\right)
+E_{1}\left(  \sigma\right)  \right\}  =\lim_{\sigma\rightarrow0}\left(
\log2\sigma-\log\sigma\right)  =\log2. \label{4.4b}%
\end{align}
Hence, from $\left(  \ref{4.4a}\right)  $, \ the fourth term of $\left(
\ref{3.9}\right)  $ is
\begin{equation}
C^{3}\left\{  E_{1}\left(  \rho\right)  \left(  \log2-1\right)  +O\left(
1\right)  \right\}  =-C^{3}\left\{  \left(  \log2-1\right)  \log\rho+O\left(
1\right)  \right\}  \ \text{ as }\rho\rightarrow0. \label{4.5}%
\end{equation}

Now setting $\rho=\varepsilon$ \ and using $\left(  \ref{4.3}\right)  ,\left(
\ref{4.4}\right)  $, and $\left(  \ref{4.5}\right)  $, we obtain that
\begin{align*}
-1  &  =-C\left(  -\log\varepsilon-\gamma+O\left(  \varepsilon\right)
\right)  +C^{2}\left(  -\log\varepsilon-\gamma-2\log2+O\left(  \varepsilon
\right)  \right) \\
&  +\frac{1}{2}C^{3}\left(  \log\varepsilon+O\left(  1\right)  \right)
-C^{3}\left\{  \left(  \log2-1\right)  \log\varepsilon+O\left(  1\right)
\right\}
\end{align*}
as $\varepsilon\rightarrow0$. \ Hence,%
\begin{equation}
\frac{1}{\log\left(  \frac{1}{\varepsilon}\right)  }=C\left(  1-\frac{\gamma
}{\log\left(  \frac{1}{\varepsilon}\right)  }\right)  -C^{2}\left(
1-\frac{\gamma+2\log2}{\log\left(  \frac{1}{\varepsilon}\right)  }\right)
+C^{3}\left(  \frac{3}{2}-\log2\right)  +O\left(  \log^{-4}\left(  \frac
{1}{\varepsilon}\right)  \right)  ,
\end{equation}
and
\[
C=\frac{1}{\log\left(  \frac{1}{\varepsilon}\right)  }+\frac{A}{\log
^{2}\left(  \frac{1}{\varepsilon}\right)  }+\frac{B}{\log^{3}\left(  \frac
{1}{\varepsilon}\right)  }+O\left(  \frac{1}{\log^{4}\left(  \frac
{1}{\varepsilon}\right)  }\right)  ,
\]
where
\begin{align*}
-\gamma+A-1  &  =0,\\
B-\gamma A-2A+\left(  \gamma+2\log2\right)  +\frac{3}{2}-\log2  &  =0.
\end{align*}
Hence,%
\begin{align*}
A  &  =\gamma+1\\
B  &  =\gamma^{2}+2\gamma+\frac{1}{2}-\log2.
\end{align*}
Thus, for fixed $r,$ $\rho$ of order $\varepsilon,$ we have, with
$\lambda=\log\left(  \frac{1}{\varepsilon}\right)  ,$%
\begin{align*}
u-1  &  =\left(  \frac{1}{\lambda}+\frac{\gamma+1}{\lambda^{2}}+\frac{\left(
\gamma+1\right)  ^{2}-\frac{1}{2}-\log2}{\lambda^{3}}\right)  \left(  \log
r+\log\varepsilon+\gamma\right) \\
&  +\frac{1}{\lambda^{2}}\left(  1+\frac{2\left(  \gamma+1\right)  }{\lambda
}\right)  \left(  -\log r-\log\varepsilon-\gamma-2\log2\right) \\
&  +\frac{1}{\lambda^{3}}\left(  \frac{3}{2}-\log2\right)  \left(  \log
r+\log\varepsilon\right)  +O\left(  \lambda^{-4}\right)  ,
\end{align*}
so that, after cancellation,%
\[
u=\frac{\log r}{\lambda}+\frac{\gamma\log r}{\lambda^{2}}+O\left(
\lambda^{-3}\right)  .
\]
This is the \textquotedblleft inner expansion\textquotedblright. \ For the
\textquotedblleft outer expansion\textquotedblright, i.e. fixed $\rho,$ \ $r$
of order $\frac{1}{\varepsilon},$ we use $\left(  \ref{3.9}\right)  ,$
truncated to second order, to get%
\[
u-1=-E_{1}\left(  \rho\right)  \left(  \frac{1}{\lambda}+\frac{\gamma
+1}{\lambda^{2}}\right)  +\frac{1}{\lambda^{2}}\left(  2E_{1}\left(
2\rho\right)  -e^{-\rho}E_{1}\left(  \rho\right)  \right)  +O\left(
\lambda^{-3}\right)  .
\]
These results are in accordance with those of Hinch and of others on this
problem. \ 

\bigskip

\section{The case $k=0,n=3\label{n=3}$}

Here we are interested in (from $\left(  \ref{4.0}\right)  $ and $\left(
\ref{4.01}\right)  $)%
\[
E_{2}\left(  \rho\right)  =\frac{e^{-\rho}}{\rho}-E_{1}\left(  \rho\right)
=\frac{1}{\rho}+\log\rho+\left(  \gamma-1\right)  -\frac{1}{2}\rho+O\left(
\rho^{2}\log\rho\right)  \text{ as \ }\rho\rightarrow0.
\]
Thus, the first term on the right of $\left(  \ref{3.9}\right)  $ evaluated at
$\rho=\varepsilon$ \ is
\[
-C\left(  1+\varepsilon\log\varepsilon+\left(  \gamma-1\right)  +O\left(
\varepsilon^{2}\right)  \right)  \text{ as }\varepsilon\rightarrow0.
\]
The second term is
\begin{align*}
&  \left(  C\varepsilon\right)  ^{2}\int_{\infty}^{\rho}\frac{1}{\tau^{2}%
}e^{-\tau}\left(  \int_{\infty}^{\tau}E_{2}d\sigma\right)  d\tau\\
&  =\left(  C\varepsilon\right)  ^{2}\left\{  \left[  -E_{2}\left(
\tau\right)  \int_{\infty}^{\tau}E_{2}\left(  \sigma\right)  d\sigma\right]
_{\infty}^{\rho}+\int_{\infty}^{\rho}E_{2}^{2}d\tau\right\} \\
&  =\left(  C\varepsilon\right)  ^{2}\left\{  -E_{2}\left(  \rho\right)
\int_{\infty}^{\rho}E_{2}\left(  \tau\right)  d\tau+\int_{\infty}^{\rho}%
E_{2}^{2}d\tau\right\}  .
\end{align*}
From $\left(  \ref{4.01}\right)  $ we see that
\[
\int_{\infty}^{\rho}E_{2}^{2}d\tau=-\frac{1}{\rho}+\log^{2}\rho+O\left(
\log\rho\right)  \text{ as }\rho\rightarrow0,
\]
while from $\left(  \ref{3.10}\right)  ,$
\begin{align*}
\int_{\infty}^{\rho}E_{2}d\tau &  =\rho E_{2}-E_{1}=1+\log\rho+\gamma+O\left(
\rho\log\rho\right)  ,\\
E_{2}\int_{\infty}^{\rho}E_{2}d\tau &  =\frac{1}{\rho}\log\rho+\frac{\gamma
+1}{\rho}+O\left(  \log^{2}\rho\right)  .
\end{align*}
In all, the second term is
\[
\left(  C\varepsilon\right)  ^{2}\left\{  -\frac{1}{\rho}\log\rho-\frac
{\gamma+2}{\rho}+O\left(  \log^{2}\rho\right)  \right\}  .
\]

It is readily verified that the third and fourth terms in $\left(
\ref{3.9}\right)  $give $O\left\{  C^{3}\varepsilon^{3}\left(  \frac{1}{\rho
}\log^{2}\rho\right)  \right\}  ,$ which is negligible. Thus, evaluating
$\left(  \ref{3.9}\right)  $ at $\rho=\varepsilon$, \ we have
\[
-1=-C\varepsilon\left(  \frac{1}{\varepsilon}+\log\varepsilon+\gamma-1\right)
+\left(  C\varepsilon\right)  ^{2}\left(  -\frac{1}{\varepsilon}%
\log\varepsilon-\frac{\gamma+2}{\varepsilon}\right)  +O\left(  C^{3}%
\varepsilon^{2}\log^{2}\varepsilon\right)  ,
\]
so that
\[
C=1-2\varepsilon\log\varepsilon-\varepsilon\left(  2\gamma+1\right)  +O\left(
\varepsilon^{2}\log^{2}\varepsilon\right)  .
\]
\bigskip

Then, for fixed $r,$ $\rho$ of order $\varepsilon,$ we have%

\begin{align*}
u-1  &  =-\varepsilon\left(  1-2\varepsilon\log\varepsilon-\varepsilon\left(
2\gamma+1\right)  \right)  \left(  \frac{1}{\varepsilon r}+\log\varepsilon
+\log r+\gamma-1\right) \\
&  +\varepsilon^{2}\left(  -\frac{1}{\varepsilon r}\left(  \log\varepsilon
+\log r\right)  -\frac{\gamma+2}{\varepsilon r}\right)  +O\left(
\varepsilon^{2}\log^{2}\varepsilon\right)  ,
\end{align*}%
\begin{align*}
u  &  =1-\frac{1}{r}-\varepsilon\log\varepsilon\left(  1-\frac{1}{r}\right)
-\varepsilon\left(  \log r+\frac{\log r}{r}\right)  +\varepsilon\left(
1-\gamma\right)  \left(  1-\frac{1}{r}\right) \\
&  +O\left(  \varepsilon^{2}\log^{2}\varepsilon\right)  .
\end{align*}

For fixed $\rho,$ \ $r$ of order $\varepsilon^{-1},$ we again use $\left(
\ref{3.9}\right)  $, to give
\begin{align}
u-1  &  =-\varepsilon\left(  1-2\varepsilon\log\varepsilon-\varepsilon\left(
2\gamma+1\right)  \right)  E_{2}\left(  \rho\right) \nonumber\\
&  +\varepsilon^{2}\left\{  E_{1}\left(  \rho\right)  E_{2}\left(
\rho\right)  -\rho E_{2}^{2}\left(  \rho\right)  -\int_{\rho}^{\infty}%
E_{2}^{2}d\tau\right\}  +O\left(  \varepsilon^{3}\right)  . \label{5.1}%
\end{align}
Again, these results are in agreement with those of Hinch, and others,
although $\left(  \ref{5.1}\right)  $ gives one term further.

\bigskip

Remark 3. \ It is of interest to consider what happens when $n<2,$\ since, at
least for $n\geq1$, \ there still exists a unique solution. \ The equation
$\left(  \ref{3.9}\right)  $\ is still valid at $\rho=\varepsilon$, \ but
since $E_{n-1}\left(  \rho\right)  $\ is no longer singular at $\rho=0$\ for
$n<2$, \ \ $\left(  \ref{3.9}\right)  $\ with $\rho=\varepsilon$\ becomes
merely an implicit equation for $C\varepsilon^{n-2}$. \ This tells us that
$C\rightarrow0$, \ since $\varepsilon^{n-2}\rightarrow\infty$, \ but we no
longer get an asymptotic expansion. \ In particular, \ it is no longer obvious
that $C$\ is unique. \ Of course, we know this from Theorem \ref{th1} if
$n\geq1.$\ 

\section{The case $k=1$}

We can in fact treat a generalization which causes no further difficulties,%
\begin{equation}
u^{\prime\prime}+\frac{n-1}{r}u^{\prime}+f\left(  u\right)  u^{\prime
2}+\varepsilon u~u^{\prime}=0, \label{6.2}%
\end{equation}
with the same boundary conditions. \ As before, we will compare our results
with those of Hinch in \cite{hi}.

\bigskip

As remarked in the proof of Theorem 1, the solution will necessarily have
$u^{\prime}>0$ so that conditions on $f\left(  u\right)  $ are necessary only
for $0\leq u\leq1$. \ We require only that $f$ be continuous and positive in
this interval.

\bigskip

Then $\left(  \ref{6.2}\right)  $ can be written as
\[
\frac{\left(  r^{n-1}u^{\prime}\right)  ^{\prime}}{r^{n-1}u^{\prime}}+f\left(
u\right)  u^{\prime}+\varepsilon u=0,
\]
so that%
\[
\log\left(  r^{n-1}u^{\prime}\right)  =-F\left(  u\right)  -\varepsilon
\int_{1}^{r}udt+A
\]
for some constant $A,$ where
\[
F\left(  u\right)  =\int_{0}^{u}f\left(  s\right)  ds.
\]
This becomes
\[
e^{F\left(  u\right)  }u^{\prime}=\frac{C}{r^{n-1}}e^{-\varepsilon
r-\varepsilon\int_{\infty}^{r}\left(  u-1\right)  ds},
\]
or, on integration,%
\[
G\left(  u\right)  -G\left(  1\right)  =C\int_{\infty}^{r}\frac{1}{t^{n-1}%
}e^{-\varepsilon t-\varepsilon\int_{\infty}^{t}\left(  u-1\right)  ds}dt,
\]
where%
\[
G\left(  u\right)  =\int_{0}^{u}e^{F\left(  v\right)  }dv.
\]

In order to keep the manipulations simple and effect comparisons, we will
consider from here the Lagerstrom model, \ where $f\left(  u\right)  =1,$
$F\left(  u\right)  =u,$ \ $G\left(  u\right)  =e^{u}-1.$ Then, with
$\varepsilon r=\rho$, \ $\varepsilon t=\tau$, \ we have
\begin{equation}
e^{u}-e=C\varepsilon^{n-2}\int_{\infty}^{\rho}\frac{1}{\tau^{n-1}}e^{-\tau
}e^{-\int_{\infty}^{\tau}\left(  u-1\right)  d\sigma}d\tau, \label{6.2a}%
\end{equation}
and writing
\[
u-1=\frac{u-1}{e^{u}-e}\left(  e^{u}-e\right)  ,
\]
we get
\begin{equation}
e^{u}-e=C\varepsilon^{n-2}\int_{\infty}^{\rho}\frac{1}{\tau^{n-1}}e^{-\tau
}e^{-\int_{\infty}^{\tau}\frac{u-1}{e^{u}-e}\left(  e^{u}-e\right)  d\sigma
}d\tau. \label{6.3}%
\end{equation}

\ As in section \ref{infser}, we can integrate by parts, and since $0\leq
\frac{u-1}{e^{u}-e}\leq1$ in $0\leq u<1,$ we will develop a convergent series
as before. \ To get the first three terms (necessary to give Hinch's accuracy
when $n=2$), \ we have from $\left(  \ref{6.3}\right)  $ that
\begin{align}
&  e^{u}-e\nonumber\\
&  =C\varepsilon^{n-2}\int_{\infty}^{\rho}\frac{1}{\tau^{n-1}}e^{-\tau
}\left\{  1-\int_{\infty}^{\tau}\frac{u-1}{e^{u}-e}\left(  e^{u}-e\right)
d\sigma+\frac{1}{2}\left(  \int_{\infty}^{\tau}\frac{u-1}{e^{u}-e}\left(
e^{u}-e\right)  d\sigma\right)  ^{2}+\cdot\cdot\cdot\right\}  d\tau.
\label{6.4a}%
\end{align}

As before, since $e^{u}-e\rightarrow0$ \ exponentially fast as $\rho
\rightarrow\infty,$ \ the series in the integrand converges uniformly for
large $\tau$, \ so that $\left(  \ref{6.4a}\right)  $ is valid for large
$\rho.$ \ But again we need to extend it down to $\rho=\varepsilon$. \ From
$\left(  \ref{6.2a}\right)  $we have
\[
e^{u}-e\leq C\varepsilon^{n-2}E_{n-1}\left(  \rho\right)
\]
and so the convergence proof is the same as that preceding $\left(
\ref{3.9}\right)  .$ \ 

\bigskip

Before proceeding further with $n=2$, \ we make a couple of remarks about the
simpler case $n>2$. \ Then, as we saw in subsection \ref{n=3}, \ only two
terms are necessary to give the required accuracy, \ and then $\left(
\ref{6.4a}\right)  $ gives
\[
e^{u}-u=C\varepsilon^{n-2}\int_{\infty}^{\rho}\frac{e^{-\tau}}{\tau^{n-1}%
}\left\{  1-\int_{\infty}^{\tau}\frac{u-1}{e^{u}-e}\left(  e^{u}-e\right)
d\sigma+\cdot\cdot\cdot\right\}
\]
and since $\frac{u-1}{e^{u}-e}$ appears in what is already the highest order
term, we can replace it by its limit as $u\rightarrow1,$ i.e. $\frac{1}{e}.$
\ Thus we get, to the required order,
\[
e^{u}-e=-C\varepsilon^{n-2}E_{n-1}-\left(  \frac{C\varepsilon^{n-2}}%
{e}\right)  \int_{\infty}^{\rho}\frac{e^{-\tau}}{\tau^{n-1}}\int_{\infty
}^{\tau}\left(  e^{u}-e\right)  d\sigma d\tau.
\]
(We will proceed more carefully for $n=2$.) \ This, apart from the factor
$\frac{1}{e},$ is the same equation as we dealt with in section \ref{n=3}
(with $e^{u}-e$ in place of $u-1$), and the solution can be written down from
there. \ (If we had a general function $f$ in place of $1,$ we would get
\[
e^{F\left(  u\right)  }-e^{F\left(  1\right)  }=-C\varepsilon^{n-2}%
E_{n-1}-\frac{C\varepsilon^{n-2}}{e^{F\left(  1\right)  }f\left(  1\right)
}\int_{\infty}^{\rho}\frac{e^{-\tau}}{\tau^{n-1}}\left(  \int_{\infty}^{\tau
}\left(  e^{F\left(  u\right)  }-e^{F\left(  1\right)  }\right)
d\sigma\right)  d\tau.\text{)}%
\]

\bigskip

Turning now to the case $n=2,$ and $F\left(  u\right)  =u,$ we need three
terms on the right of $\left(  \ref{6.4a}\right)  .$ Thus,
\begin{equation}
\frac{u-1}{e^{u}-e}=\frac{1}{e}-\frac{1}{2e^{2}}\left(  e^{u}-e\right)
+O\left(  e^{u}-e\right)  ^{2}\text{ as }u\rightarrow1.
\end{equation}
We follow the method used just before $\left(  \ref{3.9}\right)  $ and obtain
from $\left(  \ref{6.4a}\right)  $ that
\begin{align*}
e^{u}-e  &  =-C\varepsilon^{n-2}E_{n-1}+\frac{1}{e}\left(  C\varepsilon
^{n-2}\right)  ^{2}\int_{\infty}^{\rho}\frac{1}{\tau^{n-1}}e^{-\tau}\left(
\int_{\infty}^{\tau}E_{n-1}d\sigma\right)  d\tau\\
&  +\frac{1}{2e^{2}}\left(  C\varepsilon^{n-2}\right)  ^{3}\int_{\infty}%
^{\rho}\frac{1}{\tau^{n-1}}e^{-\tau}\left(  \int_{\infty}^{\tau}E_{n-1}%
^{2}d\sigma\right)  d\tau\\
&  +\frac{1}{2e^{2}}\left(  C\varepsilon^{n-2}\right)  ^{3}\int_{\infty}%
^{\rho}\frac{1}{\tau^{n-1}}e^{-\tau}\left(  \int_{\infty}^{\tau}E_{n-1}%
d\sigma\right)  ^{2}d\tau\\
&  -\frac{1}{e^{2}}\left(  C\varepsilon^{n-2}\right)  ^{3}\int_{\infty}^{\rho
}\frac{1}{\tau^{n-1}}e^{-\tau}\int_{\infty}^{\tau}\left\{  \int_{\infty
}^{\sigma}\frac{1}{s^{n-1}}e^{-s}\left(  \int_{\infty}^{s}E_{n-1}dt\right)
ds\right\}  d\sigma d\tau+O\left(  \Phi^{4}\right) \\
&  =-C\varepsilon^{n-2}E_{n-1}+F_{1}+F_{2}+F_{3}+F_{4}+O\left(  \Phi
^{4}\right)  ,\text{ \ say.}%
\end{align*}
As before, if $n>2$ then this is valid for any $C$ as $\varepsilon
\rightarrow0,$ uniformly in $\rho\geq\varepsilon,$\ while if $n=2,$ \ it is
valid as $C\rightarrow0.$ \ 

For $n=2$ we can continue to follow the argument in section 4. \ Thus, as
$\rho\rightarrow0,$
\begin{align*}
F_{1}  &  =\frac{1}{e}C^{2}\left(  -\log\rho-\gamma-2\log2+O\left(
\rho\right)  \right)  \text{ }\\
F_{3}  &  =\frac{1}{2e^{2}}C^{3}\left(  \log\rho+O\left(  1\right)  \right) \\
F_{4}  &  =-\frac{1}{e^{2}}C^{3}\left[  \left(  \log2-1\right)  \log
\rho+O\left(  1\right)  \right]  .
\end{align*}
The term $F_{2}$ did not appear before. \ \ Only the highest order term is
needed for our expansion and this is
\[
-\frac{1}{2e^{2}}C^{3}\left(  \int_{\infty}^{\rho}\frac{1}{\tau}e^{-\tau}%
d\tau\right)  \int_{0}^{\infty}E_{1}^{2}d\sigma.
\]
Now
\begin{align*}
\int_{0}^{\infty}E_{1}^{2}d\sigma &  =\left[  \tau E_{1}^{2}\right]
_{0}^{\infty}+2\int_{0}^{\infty}\tau\frac{e^{-\tau}}{\tau}E_{1}d\tau\\
&  =2\int_{0}^{\infty}e^{-\tau}E_{1}d\tau=2\log2,\text{ from }\left(
\ref{4.4b}\right)  .
\end{align*}
Thus,%
\[
F_{2}=\frac{1}{e^{2}}C^{3}E_{1}\left(  \log2+O\left(  \rho\log^{2}\rho\right)
\right)  =-\frac{1}{e^{2}}C^{3}\left(  \left(  \log2\right)  \log\rho+O\left(
1\right)  \right)  ,
\]
and, evaluating at $\rho=\varepsilon,$ we have
\begin{align*}
1-e  &  =C\left(  \log\varepsilon+\gamma+O\left(  \varepsilon\right)  \right)
\\
&  -\frac{1}{e}C^{2}\left(  \log\varepsilon+\gamma+2\log2+O\left(
\varepsilon\right)  \right)  +\frac{1}{2e^{2}}C^{3}\log\varepsilon\left(
-2\log2+1-2\log2+2\right)  +O\left(  C^{3}\right)  ,\\
\frac{e-1}{\log\frac{1}{\varepsilon}}  &  =C\left(  1-\frac{\gamma}%
{\log\left(  \frac{1}{\varepsilon}\right)  }\right)  -\frac{1}{e}C^{2}\left(
1-\frac{\gamma+2\log2}{\log\left(  \frac{1}{\varepsilon}\right)  }\right) \\
&  +\frac{1}{2e^{2}}C^{3}\left(  3-4\log2+O\left(  \frac{C\varepsilon}%
{\log\varepsilon}\right)  +O\left(  \frac{C^{2}\varepsilon}{\log\varepsilon
}\right)  +O\left(  \frac{C^{3}}{\log\varepsilon}\right)  \right)  .
\end{align*}
Hence if
\[
C=\frac{e-1}{\log\left(  \frac{1}{\varepsilon}\right)  }+\frac{A}{\log
^{2}\left(  \frac{1}{\varepsilon}\right)  }+\frac{B}{\log^{3}\left(  \frac
{1}{\varepsilon}\right)  }+O\left(  \log^{-4}\left(  \frac{1}{\varepsilon
}\right)  \right)  ,
\]
then
\begin{align*}
-\gamma\left(  e-1\right)  +A-\frac{\left(  e-1\right)  ^{2}}{e}  &  =0,\\
A  &  =\frac{e-1}{e}\left(  \gamma e+e-1\right)  ,\\
B-A\gamma+\frac{\left(  e-1\right)  ^{2}}{e}\left(  \gamma+2\log2\right)
-\frac{2A\left(  e-1\right)  }{e}+  &  \frac{1}{2e^{2}}\left(  e-1\right)
^{3}\left(  3-4\log2\right)  =0.
\end{align*}
We can of course calculate $B,$ but in fact its value will be be irrelevant to
the level of approximation that we take.

\bigskip

Then, for fixed $r$ \ $\ $($\rho$ of order $\varepsilon$ ), we have, with
$l=\log\left(  \frac{1}{\varepsilon}\right)  ,$%
\begin{align}
e^{u}-e  &  =\left(  e-1\right)  \left\{  \frac{1}{l}+\frac{\gamma+1-\frac
{1}{e}}{l^{2}}+\frac{B/\left(  e-1\right)  }{l^{3}}\right\}  \left(
\log\varepsilon+\log r+\gamma\right) \nonumber\\
&  +\frac{1}{e}\left(  e-1\right)  ^{2}\left\{  \frac{1}{l^{2}}+\frac{2\left(
\gamma+1-\frac{1}{e}\right)  }{l^{3}}\right\}  \left(  -\log\varepsilon-\log
r-\gamma-2\log2\right) \nonumber\\
&  +\frac{1}{2e^{2}}\frac{\left(  e-1\right)  ^{3}}{l^{3}}\left(
3-4\log2\right)  \left(  \log\varepsilon+\log r\right)  +O\left(
l^{-3}\right) \nonumber\\
&  =1-e+\frac{\left(  e-1\right)  \log r}{l}+\frac{\gamma\left(  e-1\right)
\log r}{l^{2}}+O\left(  l^{-3}\right)  . \label{6.5}%
\end{align}

(Note that the definitions of $A$ and $B$ were such that $u=0$ \ at $r=1$ up
to and including order $l^{-2},$ so that to that order there can be only terms
in $\log r,$ not constant terms. We do not need the explicit value of $B$.) To
obtain $u,$ we have to invert, so that
\[
u=\log\left\{  1+\frac{e-1}{l}\log r+\frac{\gamma\left(  e-1\right)  }{l^{2}%
}\log r+O\left(  l^{-3}\right)  \right\}  .
\]

\bigskip

For fixed $\rho,$ $r$ of order $\varepsilon^{-1},$ \ we have
\[
e^{u}-e=-\frac{e-1}{l}\left(  1+\frac{\gamma+1-\frac{1}{e}}{l}\right)
E_{1}\left(  \rho\right)  +\frac{\left(  e-1\right)  ^{2}}{el^{2}}\left(
2E_{1}\left(  2\rho\right)  -e^{-\rho}E_{1}\left(  \rho\right)  \right)
+O\left(  l^{-3}\right)  .
\]
Thus%
\begin{align}
u-1  &  =\frac{1}{e}\left(  e^{u}-e\right)  -\frac{1}{2e^{2}}\left(
e^{u}-e\right)  ^{2}+\cdot\cdot\cdot\nonumber\\
&  =-\frac{e-1}{e}\left(  1+\frac{\gamma+1-\frac{1}{e}}{l}\right)  \frac
{E_{1}\left(  \rho\right)  }{l}+\frac{\left(  e-1\right)  ^{2}}{e^{2}}%
\frac{\left(  2E_{1}\left(  2\rho\right)  -e^{-\rho}E_{1}\left(  \rho\right)
\right)  }{l^{2}}\nonumber\\
&  -\frac{\left(  e-1\right)  ^{2}}{2e^{2}}\frac{E_{1}^{2}\left(  \rho\right)
}{l^{2}}+O\left(  l^{-3}\right)  . \label{6.6}%
\end{align}
Again, these results are consistent with those of Hinch and others, except
that Hinch has an algebraic mistake which in $\left(  \ref{6.6}\right)  $
replaces $\gamma+1-\frac{1}{e}$by \ $\gamma-1+\frac{1}{e}$. \ \ 

\section{Final Remarks}

Starting with Lagerstrom, the terms involving $\log\varepsilon$ \ in the inner
expansions have been considered difficult to explain. \ They are often called
\textquotedblleft switchback\textquotedblright\ terms, because there is
nothing obvious in the equation which indicates the need for such terms, and
because, starting with an expansion in powers of $\varepsilon$, \ one finds
inconsistent results which are only resolved by adding terms of lower order,
that is, powers of $\varepsilon\log\varepsilon$. \ The recent approach to the
problem by geometric perturbation theory explains this by reference to a
\textquotedblleft resonance phenomenon\textquotedblright, which is too
complicated for us to describe here \cite{pop1},\cite{pop2}.

\bigskip

In our work, the necessity for such terms is seen already from the equation
$\left(  \ref{3.2}\right)  $ and the resulting expansion $\left(
\ref{3.3a}\right)  :$\
\[
u\left(  \rho\right)  -1=C\varepsilon^{n-2}\int_{\infty}^{\rho}\frac{1}%
{\tau^{n-1}}e^{-\tau}\left\{  1-\int_{\infty}^{\tau}\left(  u-1\right)
d\sigma+\frac{1}{2}\left(  \int_{\infty}^{\tau}\left(  u-1\right)
d\sigma\right)  ^{2}-\cdot\cdot\cdot\right\}  d\tau.
\]
In the existence proof it was seen in $\left(  \ref{e4a}\right)  $ that
$C=O\left(  1\right)  $ as $\varepsilon\rightarrow0$. \ On the right of
$\left(  \ref{3.3a}\right)  $ the first term is simply $-C\varepsilon
^{n-2}E_{n-1}\left(  \rho\right)  ,$ and the simple expansions given for
$E_{1}$ and $E_{2}$ show immediately the need for the logarithmic terms.
\ There is no \textquotedblleft switchback\textquotedblright, because the
procedure does not start with any assumption about the nature of the
expansion, and there is no need for a "matching".

\bigskip

A number of authors have noted that the outer expansion is a uniformly valid
asymptotic expansion on $[1,\infty),$ and therefore it \textquotedblleft
contains\textquotedblright\ the inner expansion \cite{hunter}, though this is
more subtle when $k=1$ \cite{skinner}. \ Our twist on this is that both
expansions are contained in the uniformly convergent series defined implicitly
by $\left(  \ref{3.3a}\right)  .$ \ The simple derivation of this series, via
the integral equation $\left(  \ref{3.2}\right)  $ is new, as far as we know.

\bigskip

\bigskip
\end{document}